\newtheorem{theorem}{Theorem}
\newtheorem{example}[theorem]{Example}
\newtheorem{remark}[theorem]{Remark}
\newtheorem{proposition}[theorem]{Proposition}
\def\vt{\vartheta}
\def\ZZ{{\mathbb Z}}
\def\CC{{\mathbb C}}
\DeclareMathOperator{\GL}{GL}  
\DeclareMathOperator{\SL}{SL}
\DeclareMathOperator{\SO}{SO}
\DeclareMathOperator{\Sp}{Sp}
\def\Em{{\mathfrak E} }%
\def\EE{{\mathbb E}}%
\def\T{{\mathbb T}}
\def\tt{{\mathfrak t}}
\def\LL#1{{\mathcal L({#1})}}
\def\GL{{\rm GL}}
\def\nub{{\bar \nu}}
\def\mub{{\bar \mu}}
\def\zetab{{\bar \zeta}}
\def\z{{\bar z}}
\def\s{{s^*}}
\def\id{{\rm id}}
\DeclareMathOperator\pt{pt}
\def\vee{{\sf v}}
\def\om{\omega}
\def\tao{\tau_0}
\def\cc{{\rm c}}
\def\e{{\rm e}}
\def\ii{{\mathfrak i}}
\def\Tf{{\mathcal T}}
\def\Ff{{\mathcal F}}
\def\Pp{\Pi}
\def\Phh{\Phi}
\title{Elliptic classes on Langlands dual flag varieties}
\author{Rich\'ard Rim\'anyi}
\address{Department of Mathematics, University of North Carolina at Chapel Hill, USA}
\email{rimanyi@email.unc.edu}
\author{Andrzej Weber}
\address{Institute of Mathematics, University of Warsaw, Poland}
\email{aweber@mimuw.edu.pl}
\begin{document}

\begin{abstract} Characteristic classes of Schubert varieties can be used to study the geometry and the combinatorics of homogeneous spaces. We prove a relation between elliptic classes of Schubert varieties on a generalized full flag variety and those on its Langlands dual. This new symmetry is motivated by 3d mirror symmetry, and it is only revealed if Schubert calculus is elevated from cohomology or K theory to the elliptic level.  
\end{abstract}

\maketitle

\section{Introduction}

\subsection{Elliptic characteristic classes}
Schubert classes are the cohomology fundamental classes of Schubert varieties in homogeneous spaces. They play an important role in the geometric study of those spaces, as well as in algebraic combinatorics. In recent years the theory of characteristic classes of singular varieties developed under the influence of geometry, representation theory, and physics. Now we can regard Schubert classes in {\em torus equivariant elliptic cohomology} instead of ordinary cohomology \cite{RW19, KRW20}. A remarkable new feature of equivariant elliptic Schubert classes $\EE(X_\om)$ is that they necessarily depend on new variables, called dynamical (or K\"ahler) variables, and another one called $h$. Hence, when we restrict a class $\EE(X_\om)$ to a torus fixed point we obtain an elliptic function in equivariant variables, dynamical variables, and $h$. 

To be more concrete, let us assume that our underlying homogeneous space is the generalized full flag variety $G/B$ for a simply connected semisimple linear group $G$. Then the restriction of the class $\EE(X_\om)$ to a fixed point indexed by $\sigma$ (both $\om$ and $\sigma$ are elements of the corresponding Weyl group) is a function of the variables
\begin{equation}\label{introeq}
\begin{tabular}{lcl}
$\zeta_s:\T\to \CC^*$ &  & (equivariant variables),  \\
$\nu_s:\T^\vee\to \CC^*$ & & (dynamical variables),  \\
$h\in \CC$
\end{tabular}
\end{equation}
for $s\in S=\{$simple reflections$\}$.  

\subsection{3d mirror symmetry} \label{sec:physics}
A $d=3$, $\mathcal N=4$ supersymmetric gauge theory has an associated {\em Higgs branch} and an associated {\em Coulomb branch}. The parameters of the Higgs branch include the so-called Fayet-Iliopoulos (FI) parameters and the mass parameters.  Between certain pairs of such theories S-duality is constructed, under which their branches are interchanged. Composing S-duality with the comparison of the two branches of the same theory, we arrive at a relation between the respective Higgs branches of two ``3d-mirror-dual'' theories. By the nature of this duality, for these two Higgs branches the FI and mass parameters are interchanged. 

It is predicted that such a physical duality can be observed in mathematics, using elliptic characteristic classes. Namely, for certain pairs of holomorphic symplectic manifolds (with extra structure) the elliptic classes of relevant subvarieties of one should agree with the elliptic classes of relevant subvarieties of the other, {\em in a certain sense}. The ``certain sense'' should involve the interchanging of equivariant (``mass'') and dynamical (``FI'') parameters.  This prediction is now verified in two cases: for the Grassmannian and its dual in \cite{RSVZ0}, and the self duality of type A (classical) full flag variety in \cite{RSVZ}. The elliptic characteristic classes used  in those works are the so-called ``elliptic stable envelopes'' of \cite{AO16}, see also \cite{FRV18, RTV}.

In this paper we generalize the self duality of the type $A$ full flag variety to arbitrary type. Instead of using the language of stable envelopes (which are formally only defined for Nakajima quiver varieties, and $T^*G/B$ is a quiver variety only in type A), we use the language of elliptic Schubert classes introduced in \cite{RW19, KRW20}. In type A Schubert classes and stable envelopes are the same.

The 3d mirror dual of $T^*G/P$, even in type A, is not expected to be another $T^*G/P$, it is sometimes a quiver variety, and in general a bow variety.

\subsection{The result}  Several geometric and combinatorial aspects of a group $G$ match some related aspects of the Langlands dual group $G^\vee$. 
One example is that their tori are naturally dual tori of each other. Therefore, what is an equivariant (resp. dynamical) parameter for $G$, is a dynamical (resp. equivariant) parameter for $G^\vee$, cf. \eqref{introeq}. 

Let $\EE_\sigma(X_\om)$ be the restriction of $\EE(X_\om)$ to the fixed point $\sigma$, divided by the Euler class $eu(T_\sigma G/B)$ of the tangent bundle.
In this paper we compare the restricted classes
\begin{equation}\label{twofunc}
\EE_{\tao\om^{-1}}(X_{\tao\sigma^{-1}})
\qquad
\text{and}
\qquad
\EE_\sigma(X^\vee_\om). 
\end{equation}
The first one comes from the elliptic cohomology of $G/B$, the second one from that of $G^\vee/B^\vee$ ($X_\om^\vee$ is a Schubert variety in the latter). Here $\tao$ is the longest element of the Weyl group.  Our main result is Theorem \ref{main} below: the two functions in \eqref{twofunc} agree, up to a sign, if we switch the equivariant and dynamical parameters, introduce some reordering and inverses, as well as switch $h$ with $h^{-1}$. Note that 
 $\om$ and $\sigma$ also switch places in the two restricted classes in \eqref{twofunc}.

The proof relies on two recursions, called {\em Bott-Samelson recursion} and {\em R-matrix recursion}, for the elliptic Schubert classes in $G/B$, as well as some Lie theory considerations. 

\bigskip
\noindent {\bf Acknowledgments.} R.R. is supported by Simons Foundation Grant 523882. A.W. is supported by NCN grant 2013/\-08/\-A/\-ST1/\-00804 and 2016/23/G/ST1/04282 (Beethoven 2). The authors are grateful for useful conversations on the topic with A. Okounkov, L. Rozansky, A. Smirnov.

\section{Preliminaries}

Let $G$ be a simply connected semisimple linear group, and $\mathfrak g$ its Lie algebra. Let $\mathfrak g^\vee$ be the Lie algebra Langlands-dual to $\mathfrak g$, that is,
$\mathfrak g$ and $\mathfrak g^\vee$ have dual root systems (see \cite{Langlands}). Let $G^\vee$ be the simply connected semisimple group with Lie algebra $\mathfrak g^\vee$. Denote by $\T$ and $\T^\vee$ the maximal tori in $G$ and $G^\vee$. Their Lie algebras are dual to each other, i.e.
$$Lie(\T)=\tt\,,\quad  Lie(\T^\vee)=\tt^*.$$
The Weyl group $W=N(\T)/\T$ is identified with the subgroup of $\GL(\tt^*)$ generated by the reflections with respect to simple roots. The set of positive roots will be denoted by $\Phh_+$. For $\alpha \in \Phh_+$ let $s_\alpha$ be the corresponding reflection. Conversely, for a reflection $s$ let $\alpha_s$ denote the corresponding positive root. 
The set of all reflections $\Pp\subset W$ is in bijection with the set of positive roots $\Phh_+$. The group is generated by the set of simple reflections $S\subset \Pp$. The roots $\alpha_s$ for $s\in S$ form a basis of $\tt^*$.
The set of positive roots of $G$
$$\Phh_+=\{\alpha_s\}_{s\in \Pp}\subset \tt^*$$
is equal to the set of coroots of $G^\vee$, and the set of coroots of $G$  
$$\Phh^\vee_+=\{\alpha^\vee_s\}_{s\in \Pp}\subset \tt$$
is the set of roots of $G^\vee$.

Define
$$\zeta_s=\e^{-\alpha_s}:\T\to \CC^*\,,\qquad \nu_s=h^{\alpha^\vee_s}=\e^{\epsilon\alpha^\vee_s}:\T^\vee\to\CC^*\,,$$
where $\epsilon$ is a formal variable, 
$h=\e^\epsilon$. We will call $\zeta_s$ the ``equivariant variables'' and $\nu_s$ the ``dynamical variables''.
For $G=\SL_n$ the notation 
$$\zeta_s=\tfrac{z_{s+1}}{z_{s}}\,,\qquad \nu_s=\tfrac{\mu_{s+1}}{\mu_{s}} \qquad (s=1,2,\dots,n-1)$$
was used in \cite{RW19, KRW20}, where $z_s$ and $\mu_s$ are natural variables for $\GL_n$.
Define $$\zetab_s=\nu_s^{-1}:\T^\vee\to\CC^*,\qquad \nub_s=\zeta_s^{-1}:\T\to\CC^*.$$ They play the role of equivariant and dynamical variables for~$G^\vee$.

\smallskip

Consider the left actions of $\T$ and $B$ on $G/B$. Elements of the Weyl group are identified with the $\T$ fixed points according to
$$\iota:\sigma\in W=N(\T)/\T\hookrightarrow G/B.$$
For $\om\in W$ the Schubert variety $X_\om$ is the closure of the $B$ orbit of the point  $\iota(\om)$. We have $X_{\id}=\{\pt\}$ and $X_{\tao}=G/B$, where $\tao$ is the longest element of the Weyl group.

\medskip

For a complex number $q\in \CC^*$ with $|q|<1$ throughout the paper we will use the Jacobi theta function
\[
\vt(x)=\vt(x,q)=x^{1/2}(1-x^{-1})\prod_{n\geq 1}(1-q^nx)(1-q^n/x)\in \ZZ[x^{\pm 1}][[q]],
\]
as well as
\[
\delta(a,b)=\frac{\vt(ab)\vt(1)'}{\vt(a)\vt(b)}=\frac{ab-1}{(a-1)(b-1)}+q(\tfrac1{ab}-ab)+\dots\,.
\]

\section{The elliptic class of a  Schubert variety and its two recursions}

Some of the key works towards elliptic Schubert calculus include the papers \cite{BoLi0, GR13, LZ, AO16}. In \cite{RW19} the elliptic class $E(X_\om)$ is defined for the Schubert variety $X_\om\subset G/B$. As explained in that work, while the class can be set up in different $\T$ equivariant cohomology theories (cohomology, K theory, elliptic cohomology), we choose the version living in K theory:
\[
E(X_\om)\in K_{\T}(G/B)(\nu_i,h)[[q]],
\]
where $\nu_i, h$ are variables as in \eqref{introeq}, and $q\in \CC^*$, $|q|<1$. 

The definition of $E(X_\om)$ follows one of the few strategies of defining characteristic classes of singular varieties: one takes a resolution of $X_\om$, and defines $E(X_\om)$ as the push-forward of an appropriate class from the resolution. The ``appropriate class'' needs to be chosen in such a way that the obtained class on $G/B$ is independent of the resolution. This strategy was pioneered in the elliptic settings by Borisov and Libgober \cite{BoLi0, BoLi1, BoLi2}. In fact in Schubert calculus this strategy only works after a ``twist''  by a line bundle, resulting in a class necessarily depending on the $\nu_i$ and $h$ variables. For more details see \cite{RW19, KRW20}. Post factum, however, there is an alternative approach, via recursions.

All classical characteristic class notions of Schubert varieties (eg. fundamental class in $H^*$, $K$, Chern-Schwartz-MacPherson class, motivic Chern class) satisfy so-called BGG-type recursions (generalized divided difference recursions), see a summary in \cite[Section 5]{RW19}. The elliptic class is not an exception either, for $G/B$ in fact it satisfies two consistent recursions, each of which  can serve as {\em definitions} of $E(X_\om)$ for the purpose of this paper.

\begin{theorem}\cite[Formulas 1.1 and 1.2]{RW19} Let $s\in S$ be a simple reflection and $\alpha_s$ the associated simple root. Let $\LL{\alpha_s}=G\times_B\CC_{-\alpha_s}$ be the line bundle associated with $\alpha_s$. The actions of $W$ on $\tt$, $\tt^*$, $G/B$ are denoted by $s^{\nu}$, $s^{\zeta}$, $s^{\gamma}$, respectively. Then 
\begin{multline}\label{bs-rec}
\delta\left(\LL{\alpha_s},h^{\alpha_s^\vee}\right) 
\cdot s^\nu E(X_\om) -
\delta\left(\LL{\alpha_s},h\right)\cdot
s^\gamma s^\nu E(X_\om)=\\ 
=\begin{cases}E(X_{\om s})& \text{\rm if } \ell(\om s)=\ell(\om)+1\\ \\
\delta(h^{\alpha_s^\vee},h)\delta(h^{-\alpha_s^\vee},h)\cdot E(X_{\om s})
&\text{\rm if }\ell(\om s)=\ell(\om)-1,\end{cases}
\end{multline}
\begin{multline}\label{R-rec}
\delta\left(\e^{-\alpha_s},h^{\om^{-1}\alpha_s^\vee}\right) 
\cdot E(X_\om) - 
\delta\left(\e^{\alpha_s},h\right)\cdot s^\zeta
E(X_\om)=\\
=\begin{cases}E(X_{s\om})& \text{\rm if } \ell(s\om)=\ell(\om)+1\\ \\
\delta(h^{\om^{-1}\alpha_s^\vee},h)\delta(h^{-\om^{-1}\alpha_s^\vee},h)\cdot
E(X_{s\om})&\text{\rm if }\ell(s\om)=\ell(\om)-1,\end{cases}
\end{multline} 
and
$$E(X_\id)=\iota_*(1_{X_\id})\,,$$
where $\iota:X_\id\to G/B$ is the inclusion. 
\end{theorem}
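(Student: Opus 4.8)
I would prove the two recursions from the geometric definition of $E(X_\om)$, via the resolution-and-push-forward strategy. Recall that for a reduced word $\om=s_{i_1}\cdots s_{i_\ell}$ one has the Bott--Samelson manifold $Z=P_{i_1}\times_B\cdots\times_B P_{i_\ell}/B$ with multiplication map $m\colon Z\to G/B$ birational onto $X_\om$, and $E(X_\om):=m_*\,\mathcal{E}\ell\ell(Z,D_Z)$, where $\mathcal{E}\ell\ell(Z,D_Z)$ is the Borisov--Libgober elliptic class of the pair $(Z,D_Z)$ — with $D_Z$ the normal-crossing union of the Bott--Samelson boundary divisors — twisted by the line bundle carrying the $\nu$ and $h$ dependence. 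Independence of the reduced word is the content of \cite{RW19,KRW20}, which I would assume. The base case $E(X_\id)=\iota_*(1_{X_\id})$ is immediate, $X_\id$ being a point equal to its own resolution.

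For the upward Bott--Samelson recursion ($\ell(\om s)=\ell(\om)+1$) I would use that $s_{i_1}\cdots s_{i_\ell}s$ is reduced for $\om s$, so its Bott--Samelson manifold is $Z'=Z\times_B P_s/B$, a $\PP^1$-bundle $\rho\colon Z'\to Z$ (the projective completion of $\LL{\alpha_s}$), whence $E(X_{\om s})=m'_*\,\mathcal{E}\ell\ell(Z',D_{Z'})=m_*\rho_*(\cdots)$. The twisted elliptic class of $(Z',D_{Z'})$ is $\rho^*$ of that of $(Z,D_Z)$ times the relative contribution of the new $\PP^1$-factor (its boundary section and the relative piece of the twist), so everything reduces to the fibrewise push-forward $\rho_*$. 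Computing $\rho_*$ by equivariant localization on the fibre $\PP^1$, whose two fixed points have tangent weights $\pm\alpha_s$ and hence Euler classes $\vt(\e^{\mp\alpha_s})$, and repackaging the resulting theta-quotients via $\delta(a,b)=\vt(ab)\vt(1)'/(\vt(a)\vt(b))$, I expect to land exactly on $\delta(\LL{\alpha_s},h^{\alpha_s^\vee})\,s^\nu E(X_\om)-\delta(\LL{\alpha_s},h)\,s^\gamma s^\nu E(X_\om)$: the $s^\nu$ appears because the twist restricts to the two fixed points with dynamical argument related by $s$, and the $s^\gamma$ because the two boundary sections of the $\PP^1$ land in $s$-translated position in $G/B$. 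The R-matrix recursion \eqref{R-rec} is the same computation with $s$ prepended: $Z''=P_s\times_B Z$ fibres over $P_s/B=\PP^1$ with fibre $Z$, and pushing forward along that base $\PP^1$ — now realized as the Schubert curve $X_s\subset G/B$, with purely equivariant (not geometric) fixed-point data, which is why $s^\zeta$ rather than $s^\gamma$ enters — produces $\delta(\e^{-\alpha_s},h^{\om^{-1}\alpha_s^\vee})\,E(X_\om)-\delta(\e^{\alpha_s},h)\,s^\zeta E(X_\om)$, the $\om$-twisted dynamical argument $h^{\om^{-1}\alpha_s^\vee}$ arising because the $Z$-factor of $Z''$ sits over $\om$-translated points.

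The downward cases follow formally. Let $\mathcal{D}^{\mathrm{BS}}_s$, $\mathcal{D}^{\mathrm{R}}_s$ denote the operators on the left of \eqref{bs-rec} and \eqref{R-rec}. A direct theta-function computation — essentially one application of the three-term (Fay / addition) identity for $\vt$ — gives the quadratic relation $(\mathcal{D}^{\mathrm{BS}}_s)^2=\delta(h^{\alpha_s^\vee},h)\,\delta(h^{-\alpha_s^\vee},h)\cdot\id$, and likewise for $\mathcal{D}^{\mathrm{R}}_s$. When $\ell(\om s)=\ell(\om)-1$, apply the already-proven upward recursion to $\om s$ (which satisfies $\ell((\om s)s)=\ell(\om s)+1$) to get $\mathcal{D}^{\mathrm{BS}}_s E(X_{\om s})=E(X_\om)$; applying $\mathcal{D}^{\mathrm{BS}}_s$ once more and invoking the quadratic relation yields $\mathcal{D}^{\mathrm{BS}}_s E(X_\om)=\delta(h^{\alpha_s^\vee},h)\delta(h^{-\alpha_s^\vee},h)\,E(X_{\om s})$, the stated downward case; the R-matrix downward case is identical.

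The main obstacle is the bookkeeping in the $\PP^1$-bundle push-forwards: one must track precisely how the twisting line bundle — which carries all of the $\nu$-, $h$-, and (in the R-matrix case) $\om$-dependence — restricts to the two fixed points of the fibre, because it is exactly the interplay of that restriction with the relative tangent bundle of the $\PP^1$-bundle and the Borisov--Libgober boundary term that yields the precise theta-quotients $\delta(\cdot,h)$, $\delta(\cdot,h^{\alpha^\vee_s})$ and the Weyl operators $s^\nu,s^\gamma,s^\zeta$. Getting the arguments of $\vt$ and the overall signs exactly right is the delicate part; the rest is the two-recursion bootstrap above. That the two recursions are mutually consistent and together determine $E(X_\om)$ without reference to the geometry is, again, \cite{RW19}.
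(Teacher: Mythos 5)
This theorem is not proved in the present paper; it is quoted verbatim from \cite{RW19} (Formulas 1.1 and 1.2), so there is no proof here to compare against, only a citation. Your reconstruction is nonetheless a fair sketch of the strategy that \cite{RW19} uses. For the Bott--Samelson recursion \eqref{bs-rec} the key ingredients are exactly as you say: the inductive $\PP^1$-bundle structure $\rho\colon Z'\to Z$ on Bott--Samelson manifolds, fibrewise localization at the two fixed points with tangent weights $\pm\alpha_s$, and the reading of $s^\nu$ (from how the twisting line bundle restricts to the two fixed sections) and $s^\gamma$ (from the two sections landing in $s$-translated position in $G/B$) off that computation.

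Two places where your write-up is loose enough to become a gap if taken literally. For the R-matrix recursion \eqref{R-rec} you speak of ``pushing forward along that base $\PP^1$,'' but the multiplication map $P_s\times_B Z\to G/B$ does not factor through $P_s/B$, so there is no intermediate push-forward; what actually happens is a fixed-point bookkeeping: the torus fixed points of $P_s\times_B Z$ lying over a chosen $\sigma\in W$ split according to the first coordinate $\epsilon_0\in\{1,s\}$, yielding the two terms $E_\sigma(X_\om)$ and $s^\zeta E_{s\sigma}(X_\om)$ in the local form \eqref{Rmx-locE}. The factor $h^{\om^{-1}\alpha_s^\vee}$ enters through the $\om$-dependence of the twist line bundle's restriction, not through a ``translation of base points,'' so the mechanism is not quite as you described, even though the conclusion is right. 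Second, deducing the downward cases from a quadratic relation $(\mathcal D^{\rm BS}_s)^2=\delta(h^{\alpha_s^\vee},h)\,\delta(h^{-\alpha_s^\vee},h)\cdot\id$ is a legitimate shortcut, but this must be established as an operator identity rather than merely observed on elliptic classes; it requires the three-term theta identity together with the facts that $s^\nu$ and $s^\gamma$ commute and that $\delta(\LL{\alpha_s},h)$ is $s^\nu$-invariant. These are the ``delicate bookkeeping'' issues you flag, so the gap is one of precision rather than of idea, and the fact that the normalized classes $\EE$ satisfy a case-free recursion (Proposition \ref{prop:EE_recursions}) is exactly the normalized manifestation of the quadratic relation you would need.
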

Following \cite{RW19} we call \eqref{bs-rec} the {\em Bott-Samelson recursion}, and \eqref{R-rec} the {\em R-matrix recursion}.
We will need the local form of both recursions. Let
\[
E_{\sigma}(X_\om)=\frac{E(X_\om)|_{\sigma}}{eu(T_\sigma G/B)},
\]
be the restriction of the elliptic class to the central point $\sigma\in X_\sigma$, divided by the Euler classes of the tangent space.
The formulas for local recursions are given in \cite[\S11.2]{RW19}:
\begin{itemize}
\item The Bott-Samelson recursion is
\begin{multline}
\delta\left(\sigma(\zeta_s),\nu_s\right) 
\cdot s^\nu E_\sigma(X_\om) + 
\delta\left(\sigma(\zeta_s),h\right)\cdot
s^\nu E_{\sigma s}(X_\om)=\\
=\begin{cases}E_\sigma(X_{\om s})& \text{if }~~~ \ell(\om s)=\ell(\om)+1\\ \\
\delta\big(\nu_s,h\big)\delta\big(\nu_s^{-1},h\big)\cdot
E_\sigma(X_{\om s})&\text{if }~~~\ell(\om s)=\ell(\om)-1.\end{cases}
\label{BS-locE}\end{multline}
\item The R-matrix recursion for local elliptic classes is
\begin{multline}
\delta\left(\zeta_s,\om^{-1}(\nu_s)\right) 
\cdot E_\sigma(X_\om) + 
s^\zeta\left(\delta\left(\zeta_s,h\right)\cdot
E_{s\sigma}(X_\om)\right)=\\
=\begin{cases}E_\sigma(X_{s\om})& \text{if }~~~ \ell(s\om)=\ell(\om)+1\\ \\
\delta\big(\om^{-1}(\nu_s),h\big)\delta\big(\om^{-1}(\nu_s^{-1}),h\big)\cdot
E_\sigma(X_{s\om})&\text{if }~~~\ell(s\om)=\ell(\om)-1.\end{cases}\label{Rmx-locE}
\end{multline}
\end{itemize}

\section{Normalization}
Let
\[
\cc(G,\om)=\prod_{s\in \Pp:\; \om(\alpha_s)\in\Phh_+}{\delta(\nu_s^{-1},h)}.
\]
We will work with the following re-scaling of the elliptic Schubert classes
\[
\EE(X_\om)=\cc(G,\om)\cdot E(X_\om),
\]
as well as its {local} version: for $\om,\sigma \in W$ we define 
\[
\EE_{\sigma}(X_\om)=c(G,\om)\cdot E_\sigma(X_\om) =\frac{\EE(X_\om)|_{\sigma}}{eu(T_\sigma G/B)}.
\]
The indexing set in our normalization factor $\cc(G,\om)$     
is in bijection with the set
$$\Ff(G,\om)=\{\alpha_s^\vee\in \Phh^\vee_+: \;\om(\alpha^\vee_s)\in\Phh^\vee_+\}=\Phh^\vee_+\cap \om^{-1}(\Phh^\vee_+),$$
hence we have the alternative expression
\[
\cc(G,\om)=
\prod_{\alpha^\vee\in\Ff(G,\om)}{\delta(h^{-\alpha^\vee},h)}.
\]

The indexing set has the following geometric interpretation: 
Let $\Tf(G,\om)$ be the set of weights appearing in the tangent space of $X_\om$ at the central point. According to  \cite[\S14.12 Theorem (b)]{Borel} 
$$\Tf(G,\om)=\Phh_+\cap \om \Phh_-.$$
Hence
$$\Ff(G,\om)=\Phh_+^\vee\setminus \Tf(G^\vee,\om^{-1})\,.$$
\begin{proposition}\label{normalization_recursion} We have
\begin{equation}\cc(G,\om s)=\begin{cases}\frac1{\delta (\nu_s,h )}\cdot s^\nu\cc(G,\om)&\text{if }~~~\ell(\om s)>\ell(\om),\\ \\
\delta (\nu_s^{-1},h )\cdot s^\nu\cc(G,\om)&\text{if }~~~\ell(\om s)<\ell(\om),
\end{cases}\end{equation}
\begin{equation}\cc(G,s\om)=\begin{cases}\frac1{\delta (\om^{-1}(\nu_s^{-1}),h )}\cdot \cc(G,\om)&\text{if }~~~\ell(s\om )>\ell(\om),\\ \\
\delta (\om^{-1}(\nu_s),h )\cdot  \cc(G,\om)&\text{if }~~~\ell(s\om )<\ell(\om).
\end{cases}\end{equation}
\end{proposition}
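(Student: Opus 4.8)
The plan is to reduce the statement to pure root-system combinatorics by first rewriting $\cc(G,\om)$ in terms of the complement of the inversion set of $\om$. Recall that $\mathrm{Inv}(\om)=\Phh_+\cap\om^{-1}\Phh_-$ has $\ell(\om)$ elements, so its complement in $\Phh_+$ is $\Phh_+\cap\om^{-1}\Phh_+$; since $\nu_s=\e^{\epsilon\alpha_s^\vee}$ and the condition $\om(\alpha_s)\in\Phh_+$ says exactly $\alpha_s\in\Phh_+\cap\om^{-1}\Phh_+$, we may write
\[
\cc(G,\om)=\prod_{\gamma\in\Phh_+\cap\om^{-1}\Phh_+}\delta\bigl(\e^{-\epsilon\gamma^\vee},h\bigr),
\]
which is just the alternative expression $\prod_{\alpha^\vee\in\Ff(G,\om)}\delta(h^{-\alpha^\vee},h)$ recorded above. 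The two inputs I will use repeatedly are: (i) the simple reflection $s$, with simple root $\beta:=\alpha_s$, permutes $\Phh_+\setminus\{\beta\}$ and interchanges $\beta$ and $-\beta$; and (ii) the Weyl action commutes with the coroot map, $(w\gamma)^\vee=w(\gamma^\vee)$, so that $s^\nu$ sends $\e^{-\epsilon\gamma^\vee}$ to $\e^{-\epsilon(s\gamma)^\vee}$, and $\om^{-1}$ sends $\nu_s=\e^{\epsilon\beta^\vee}$ to $\e^{\epsilon\om^{-1}(\beta^\vee)}$.

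For the first (right-multiplication) recursion I would split on the sign of $\ell(\om s)-\ell(\om)$, which by the standard criterion records whether $\om(\beta)\in\Phh_+$. Using (i) one computes
\[
\Phh_+\cap(\om s)^{-1}\Phh_+=
\begin{cases}
s\bigl((\Phh_+\cap\om^{-1}\Phh_+)\setminus\{\beta\}\bigr) & \text{if }\ell(\om s)>\ell(\om),\\
\{\beta\}\sqcup s\bigl(\Phh_+\cap\om^{-1}\Phh_+\bigr) & \text{if }\ell(\om s)<\ell(\om),
\end{cases}
\]
together with the observations that $\beta\in\Phh_+\cap\om^{-1}\Phh_+$ in the first case and $\beta\notin\Phh_+\cap\om^{-1}\Phh_+$ in the second. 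Applying $\prod_\gamma\delta(\e^{-\epsilon\gamma^\vee},h)$ to these sets, reindexing by $s$ turns the product over $\Phh_+\cap\om^{-1}\Phh_+$ into $s^\nu\cc(G,\om)$ by (ii), and the single root that has been removed from, or added to, the index set contributes the factor coming from $\gamma=\beta$ with its sign flipped by (i), namely $\delta(\e^{\mp\epsilon\beta^\vee},h)=\delta(\nu_s^{\pm1},h)$; rearranging yields the two cases of the first formula. The second (left-multiplication) recursion is entirely parallel once one writes $(s\om)^{-1}=\om^{-1}s$: the distinguished root is now $\eta:=\om^{-1}(\beta)$, the sign of $\ell(s\om)-\ell(\om)$ records whether $\eta\in\Phh_+$, and one finds $\Phh_+\cap(s\om)^{-1}\Phh_+$ equals $(\Phh_+\cap\om^{-1}\Phh_+)\setminus\{\eta\}$ when $\ell(s\om)>\ell(\om)$ and $(\Phh_+\cap\om^{-1}\Phh_+)\sqcup\{-\eta\}$ when $\ell(s\om)<\ell(\om)$. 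This time the other roots are not moved, so no $s^\nu$ appears; the distinguished factor is $\delta(\e^{\mp\epsilon\eta^\vee},h)$, which by (ii) and $\eta^\vee=\om^{-1}(\beta^\vee)$ equals $\delta(\om^{-1}(\nu_s^{\pm1}),h)$, giving the stated formulas.

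The only step needing genuine care is the case analysis above: in each of the four cases one must check the disjoint-union/complement identities for $\Phh_+\cap(\cdot)^{-1}\Phh_+$, and in particular decide whether the distinguished root ($\beta$ or $\eta$) lies in $\Phh_+\cap\om^{-1}\Phh_+$ before the reflection and what sign it acquires afterwards. I expect this bookkeeping to be the main obstacle, though a routine one: it follows directly from (i) together with the length criteria $\ell(\om s)>\ell(\om)\iff\om(\alpha_s)\in\Phh_+$ and $\ell(s\om)>\ell(\om)\iff\om^{-1}(\alpha_s)\in\Phh_+$. Everything else is formal manipulation of finite products of $\delta$'s, using only that $s^\nu$ and $\om^{-1}$ act as ring homomorphisms fixing $h$.
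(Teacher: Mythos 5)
Your proposal is correct, and the underlying idea is essentially the one the paper uses: reduce the recursion for $\cc(G,\om)$ to a recursion for the index set $\Ff(G,\om)=\Phh_+\cap\om^{-1}\Phh_+$ (equivalently, the complement of the inversion set), with the key input being the Humphreys \S1.4 fact that a simple reflection $s$ permutes $\Phh_+\setminus\{\alpha_s\}$. The paper arrives at the set identity (its formula \eqref{BS-set2}) by going through the tangent-weight sets $\Tf(G^\vee,\cdot)$ and the recursions \eqref{c-BS}, \eqref{c-Rmx}, which it justifies either geometrically (Bott--Samelson varieties, \cite[Prop.\ 7.3]{RW19}) or ``directly''; you instead prove the needed set identities directly by pure root-system bookkeeping, and you handle all four sign cases explicitly, whereas the paper derives the $\ell(\om s)<\ell(\om)$ case by re-applying the $>$ case with $\om:=\om s$ and declares the left-multiplication case ``similar.'' So your route is a bit more self-contained and explicit, but it is not a genuinely different argument.

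One small notational slip: since $\nu_s=\e^{\epsilon\alpha_s^\vee}=\e^{\epsilon\beta^\vee}$, we have $\delta(\e^{\mp\epsilon\beta^\vee},h)=\delta(\nu_s^{\mp 1},h)$, not $\delta(\nu_s^{\pm1},h)$ as written. Concretely: in the $\ell(\om s)>\ell(\om)$ case the removed root, after the $s$-reindexing, contributes $\delta(\e^{+\epsilon\beta^\vee},h)=\delta(\nu_s,h)$ in the denominator, while in the $\ell(\om s)<\ell(\om)$ case the added root $\beta$ (not reindexed) contributes $\delta(\e^{-\epsilon\beta^\vee},h)=\delta(\nu_s^{-1},h)$. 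Your set identities give exactly this, so the conclusion is right; only the $\pm/\mp$ shorthand in that one sentence is inconsistent.
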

\begin{proof} 
We have the following recurrences
\begin{equation}\label{c-BS}\Tf(G^\vee,\om s)=\Tf(G^\vee,\om)\cup\{w(\alpha^\vee_s)\}\qquad\text{if }~~~ \ell(\om s)>\ell(\om),\end{equation}
\begin{equation}\label{c-Rmx}\Tf(G^\vee,s\om)=s^\nu\Tf(G^\vee,\om)\cup\{\alpha^\vee_s\}\qquad\text{if }~~~ \ell(s\om )>\ell(\om).\end{equation}
These can be derived from \eqref{BS-locE} and \eqref{Rmx-locE} or proven directly: \eqref{c-BS} from the inductive description of Bott-Samelson varieties \cite{Ram1}, and \eqref{c-Rmx} using induction as in the proof of \cite[Prop. 7.3]{RW19}.

From \eqref{c-BS} we obtain
\begin{equation*}\Ff(G,s\om)=\Ff(G,\om)\setminus\{w^{-1}(\alpha^\vee_s)\}\qquad\text{if }~~~ \ell(s\om )>\ell(\om).\end{equation*}
Using the fact that $\Phh_+^\vee\setminus \{\alpha^\vee_s\}=s^\nu(\Phh_+^\vee)\setminus \{-\alpha^\vee_s\}$ (see \cite[Proposition of \S1.4]{Humphreys}) we have
\begin{equation*}\Ff(G,\om s)=\Phh_+\setminus \left(s^\nu\Tf(G^\vee,\om)\cup \{\alpha^\vee_s\}\right)= s^\nu(\Phh_+)\setminus \{-\alpha^\vee_s\}\setminus s^\nu\Tf(G,\om ),\end{equation*}
and hence
\begin{equation}\label{BS-set2}\Ff(G,\om s)=s^\nu \Ff(G,\om)\setminus \{-\alpha^\vee_s\}
\qquad\text{if }~~~ \ell(\om s)>\ell(\om).\end{equation}
Now let $\ell(\om s)<\ell(\om)$, and apply \eqref{BS-set2} with $\om:=\om s$. We get
$\Ff(G,\om )=s^\nu \Ff(G,\om s)\setminus \{-\alpha^\vee_s\}$,
and hence 
\begin{equation*}\Ff(G,\om s)=s^\nu \Ff(G,\om )\cup s^\nu \{-\alpha^\vee_s\}=s^\nu \Ff(G,\om )\cup \{\alpha^\vee_s\}.\end{equation*}
The proof for the $\ell(s\om )<\ell(\om)$ case is similar.
\end{proof}
\begin{remark}\rm
The normalizing factor can also be presented as
\begin{equation}\label{f-interpretation}\cc(G,\om)=\ii\,E_{\om^{-1}\tao}(X^\vee_{\om^{-1}\tao})\,,\end{equation}
where $\ii$ is the inversion of $\T^\vee$ variables, or in other words, the variable $\zetab_s=\nu_s^{-1}$ is substituted by $\nu_s$. 
Equation \eqref{f-interpretation} follows since
$$E_{\sigma}(X^\vee_{\sigma})=\prod_{s\in \Pp: \;\alpha_s\in \sigma(\Phh_-)}{\delta(\e^{\alpha^\vee_s},h)}=\prod_{s\in \Pp: \;\alpha_s\in \sigma(\Phh_-)}{\delta(\zetab^{-1}_s,h)}.$$
We take $\sigma=\om^{-1}\tao$ and use the equalities  $\tao(\Phh_+)=\Phh_-$:
$$E_{\om^{-1}\tao}(X^\vee_{\om^{-1}\tao})=\prod_{s\in \Pp: \;\alpha_s\in \om^{-1}\tao(\Phh_-)}{\delta(\zetab^{-1}_s,h)}=
\prod_{s\in \Pp: \;\om(\alpha_s)\in \Phh_+}{\delta(\zetab^{-1}_s,h)}\,.
$$
Applying inversion of the variables we obtain equation \eqref{f-interpretation}.
\end{remark}

\begin{proposition} \label{prop:EE_recursions}
Let $s\in S$ be a simple reflection. The modified elliptic class $\EE$ satisfies the recursions
\begin{equation}
\EE_{\sigma}(X_{\om  s}) =
\frac{\delta( \sigma(\zeta_s),\nu_s)}
{\delta(\nu_s,h)}\cdot 
s^\nu\EE_{\sigma}(X_{\om })  
+ 
\frac{\delta(\sigma(\zeta_s),h)}{\delta(\nu_s,h)}\cdot
 s^\nu\EE_{\sigma s}(X_{\om }),
\label{B-S-loc-cor}
\end{equation}
\begin{equation}
\EE_{\sigma}(X_{s\om}) =
\frac{\delta(\zeta_s,\om^{-1}(\nu_s))}
{\delta(\om^{-1}(\nu_s^{-1}),h)} \cdot
\EE_{\sigma}(X_{\om }) 
+
 \frac{\delta(\zeta^{-1}_s,h)}
{\delta(\om^{-1}(\nu_s^{-1}),h)} \cdot s^\zeta\EE_{s\sigma}(X_{\om}),
\label{Rmx-loc-cor}
\end{equation}
as well as 
\[
\EE_{\tau}(X_{\id})=\begin{cases}
\prod_{s\in \Pp} \delta(\nu_s^{-1},h) & \tau=\id \\
0  & \tau\not=\id.
\end{cases}
\]
\end{proposition}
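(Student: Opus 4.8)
The plan is to derive each of the three statements directly from the corresponding facts about the unnormalized classes $E_\sigma(X_\om)$ — namely the local Bott–Samelson recursion \eqref{BS-locE}, the local R-matrix recursion \eqref{Rmx-locE}, and the initial condition $E(X_\id)=\iota_*(1_{X_\id})$ — combined with the scaling recursions for the normalizing factor $\cc(G,\om)$ proved in Proposition~\ref{normalization_recursion}. The governing identity throughout is $\EE_\sigma(X_\om)=\cc(G,\om)\cdot E_\sigma(X_\om)$, where $\cc(G,\om)$ is a function of the dynamical variables only (and $h$), hence commutes with the equivariant operators $s^\zeta$ but not with $s^\nu$.

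For \eqref{B-S-loc-cor} I would start from \eqref{BS-locE} in the case $\ell(\om s)=\ell(\om)+1$, namely
\[
\delta(\sigma(\zeta_s),\nu_s)\cdot s^\nu E_\sigma(X_\om)+\delta(\sigma(\zeta_s),h)\cdot s^\nu E_{\sigma s}(X_\om)=E_\sigma(X_{\om s}),
\]
multiply both sides by $\cc(G,\om s)$, and on the left substitute $\cc(G,\om s)=\frac{1}{\delta(\nu_s,h)}\,s^\nu\cc(G,\om)$ from Proposition~\ref{normalization_recursion}. Since $s^\nu$ is a ring homomorphism, $s^\nu\cc(G,\om)\cdot s^\nu E_\tau(X_\om)=s^\nu\big(\cc(G,\om)E_\tau(X_\om)\big)=s^\nu\EE_\tau(X_\om)$ for $\tau=\sigma$ and $\tau=\sigma s$, which yields \eqref{B-S-loc-cor} exactly. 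One must then check the case $\ell(\om s)=\ell(\om)-1$ gives the same formula: here the right side of \eqref{BS-locE} carries the extra factor $\delta(\nu_s,h)\delta(\nu_s^{-1},h)$ and Proposition~\ref{normalization_recursion} gives $\cc(G,\om s)=\delta(\nu_s^{-1},h)\cdot s^\nu\cc(G,\om)$; dividing through one recovers the coefficient $\frac{\delta(\sigma(\zeta_s),\nu_s)}{\delta(\nu_s,h)}$ etc., so the two cases are consistent. The argument for \eqref{Rmx-loc-cor} is parallel, starting from \eqref{Rmx-locE}: here the relevant scaling is $\cc(G,s\om)=\frac{1}{\delta(\om^{-1}(\nu_s^{-1}),h)}\cc(G,\om)$ (length-increasing case), and because $\cc(G,\om)$ involves only dynamical variables it passes through $s^\zeta$ untouched, so $s^\zeta\big(\delta(\zeta_s,h)E_{s\sigma}(X_\om)\big)\cdot\cc(G,\om)=s^\zeta\big(\delta(\zeta_s,h)\EE_{s\sigma}(X_\om)\big)=\delta(\zeta_s^{-1},h)\,s^\zeta\EE_{s\sigma}(X_\om)$, using $s^\zeta\zeta_s=\zeta_s^{-1}$; again one verifies the length-decreasing case is consistent.

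For the initial condition, the key point is that $E_\tau(X_\id)$ is the localization of the structure-sheaf pushforward of a point at the fixed point $\tau$, divided by the Euler class; this vanishes for $\tau\neq\id$ and at $\tau=\id$ contributes $1$ (the normalized fundamental class of a point restricted to itself is $1$ after dividing by the tangent Euler class). Then $\EE_\tau(X_\id)=\cc(G,\id)\cdot E_\tau(X_\id)$, and since $\Ff(G,\id)=\Phh^\vee_+$ (as $\id$ fixes all positive coroots), we get $\cc(G,\id)=\prod_{\alpha^\vee\in\Phh^\vee_+}\delta(h^{-\alpha^\vee},h)=\prod_{s\in\Pp}\delta(\nu_s^{-1},h)$, giving the stated formula.

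I expect the main obstacle to be purely bookkeeping: verifying that the length-decreasing cases of \eqref{BS-locE} and \eqref{Rmx-locE}, after rescaling by the appropriate $\cc$-recursion from Proposition~\ref{normalization_recursion}, collapse to the \emph{same} uniform formulas \eqref{B-S-loc-cor} and \eqref{Rmx-loc-cor} — in other words, checking that the factor $\delta(\nu_s,h)\delta(\nu_s^{-1},h)$ (resp.\ its $\om^{-1}$-twisted analogue) appearing on the right of the unnormalized recursion is precisely cancelled by the ratio of $\cc$-factors. This is a short theta-function identity but it is where a sign or an inversion could sneak in, so it deserves explicit care; everything else is a direct substitution.
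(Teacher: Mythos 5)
Your proposal is correct and follows exactly the route the paper intends: the paper's proof consists of the single sentence that the result is ``a direct consequence of \eqref{BS-locE}, \eqref{Rmx-locE}, and Proposition~\ref{normalization_recursion},'' and your argument is precisely the spelled-out version of that, including the correct observation that $s^\nu$ is a ring homomorphism, that $\cc(G,\om)$ passes through $s^\zeta$ unchanged while producing $\delta(\zeta_s^{-1},h)$ when $s^\zeta$ is applied to $\delta(\zeta_s,h)$, and that the length-decreasing cases collapse to the same formulas once the $\delta(\nu_s,h)\delta(\nu_s^{-1},h)$ (resp.\ its $\om^{-1}$-twisted analogue) is cancelled against the ratio of the $\cc$-factors.
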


\begin{proof} 
The proof is a direct consequence of \eqref{BS-locE}, \eqref{Rmx-locE}, and Proposition \ref{normalization_recursion}.
\end{proof}

\noindent Observe the favorable feature of the normalised local class $\EE_\sigma(X_\om)$ that its recursion does not have to be split into two cases.

\begin{example}\label{SL2ex} \rm 
For $G=\SL_2$ we have $W=\{\id,\tau\}$ and
$$\begin{matrix} 
\EE_{\id}(X_{\id})  =\delta\big(\frac{\mu_1}{\mu_2},h\big),&
\EE_{\tau}(X_{\id})=0,\\
\EE_{\id}(X_\tau)  =\delta\big(\frac{z_2}{z_1},\frac{\mu_2}{\mu_1}\big),&
\EE_{\tau}(X_\tau)=\delta\big(\frac{z_1}{z_2},h\big).\end{matrix}$$
The elliptic classes for $G=\SL_3$ and $\Sp(2)$ are given in \cite[\S12]{RW19}, and some examples  for $\SO(5)$ are given in Section \ref{sec:last} below.
\end{example}

\begin{remark}\rm The normalization 
\[
\Em_\sigma(X_\om)=\prod_{\alpha^\vee\in\Ff(G,\om)}{\delta(h^{-\alpha^\vee},h)^{-1}}\cdot E_\sigma(X_\om)=
\prod_{s\in \Pp}{\delta(\nu_s^{-1},h)^{-1}}\cdot \EE_\sigma(X_\om)
\]
used in \cite{RW19} also has nice behavior for recursions, but for the purpose of our duality theorem (Theorem~\ref{main} below) the normalization $\EE_\sigma(X_\om)$ is necesary.
\end{remark} 
\section{The main theorem}

We are ready to state the coincidence between local elliptic classes on $G/B$ and $G^{\vee}/B^{\vee}$. 

\begin{theorem}\label{main} 
The modified elliptic class $\EE$ satisfies the duality
$$
(-1)^{\ell(\tao)}\,\Big(\EE_{\tao \om^{-1}}(X_{\tao\sigma^{-1}})\Big)^\#
=
\EE_{\sigma}(X^\vee_\om)\,.
$$
where $\#$ denotes precomposing with
$$\T^\vee\times \T\times\CC^*\to \T\times \T^\vee\times\CC^*,$$
$$(a,b,c)\mapsto (\tao(b)^{-1},a,c^{-1}),$$
which leads to the substitution
$$
\zeta_s:=\tao(\nub_s)\,,\qquad \nu_s:=\zetab_s^{-1}\,,\qquad h:= h^{-1}.
$$
\end{theorem}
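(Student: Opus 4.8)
The plan is to prove the identity by induction on $\ell(\om)$ simultaneously over all $\sigma$, using the two recursions from Proposition~\ref{prop:EE_recursions} as the engine. The key observation is that the operation $\#$ combined with the reindexing $\om\mapsto\tao\om^{-1}$, $\sigma\mapsto\tao\sigma^{-1}$ should interchange the Bott-Samelson recursion \eqref{B-S-loc-cor} for $G/B$ with the R-matrix recursion \eqref{Rmx-loc-cor} for $G^\vee/B^\vee$. First I would fix the base case $\om=\id$: by Proposition~\ref{prop:EE_recursions} the right-hand side $\EE_\sigma(X^\vee_\id)$ vanishes unless $\sigma=\id$, in which case it equals $\prod_{s\in\Pp}\delta(\zetab_s^{-1},h)$; on the left-hand side $X_{\tao\sigma^{-1}}$ with $\om=\id$ forces us to evaluate $\EE_{\tao}(X_{\tao})$ at $\sigma=\id$, and more generally $\EE_{\tao\om^{-1}}(X_\tao)$, i.e.\ the restriction of the fundamental class of the whole flag variety; the remark around \eqref{f-interpretation} together with Example~\ref{SL2ex}-type computations gives the product $\prod_{s\in\Pp}\delta(\zeta_s,h)$ or its inverse variant, and one checks the sign $(-1)^{\ell(\tao)}$ and the substitution match. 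This base-case bookkeeping is the first concrete task.

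For the inductive step I would take $\om' = s\om$ with $\ell(s\om)>\ell(\om)$ (left multiplication), so that on the $G^\vee$ side we use the R-matrix recursion \eqref{Rmx-loc-cor} to express $\EE_\sigma(X^\vee_{s\om})$ in terms of $\EE_\sigma(X^\vee_\om)$ and $s^{\zetab}\EE_{s\sigma}(X^\vee_\om)$, with coefficients built from $\delta(\zetab_s,\om^{-1}(\nub_s))$, $\delta(\zetab_s^{-1},h)$ and $\delta(\om^{-1}(\nub_s^{-1}),h)$. On the $G/B$ side, $\tao(s\om)^{-1}=\tao\om^{-1}s$, so right multiplication by $s$, and we apply the Bott-Samelson recursion \eqref{B-S-loc-cor} to $\EE_{\tao\om^{-1}}(X_{\tao\om^{-1}s})$ — note $\ell(\tao\om^{-1}s)$ vs.\ $\ell(\tao\om^{-1})$: since $\ell(\tao\tau)=\ell(\tao)-\ell(\tau)$, increasing $\ell(s\om)$ corresponds to the length going \emph{up} on the $G/B$ side as well, so the uniform (case-free) form of \eqref{B-S-loc-cor} applies directly. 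The crux is then to check that applying $\#$ to \eqref{B-S-loc-cor} — which sends $\zeta_s\mapsto\tao(\nub_s)$, $\nu_s\mapsto\zetab_s^{-1}$, $h\mapsto h^{-1}$, and conjugates the Weyl-group actions accordingly, with $\sigma\mapsto\tao\sigma^{-1}$ turning $s^\nu$ (right action, and the argument $\sigma(\zeta_s)$) into the data $s^\zeta$, $\zetab_s$, $\om^{-1}(\cdot)$ appearing in \eqref{Rmx-loc-cor} — produces exactly the R-matrix recursion for $G^\vee$, coefficient by coefficient. Here one uses the Langlands-duality dictionary from the Preliminaries: $\Phh_+(G)=\Phh_+^\vee(G^\vee)$, $\zetab_s=\nu_s^{-1}$, $\nub_s=\zeta_s^{-1}$, and the identities $\tao(\zeta_s)=\zeta_{s'}^{-1}$-type relations coming from $\tao\Phh_+=\Phh_-$ together with the involution $s\mapsto \tao s\tao$ permuting simple reflections. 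The functional equations of $\delta$ (namely $\delta(a,b)=\delta(b,a)$ and $\delta(a^{-1},b^{-1})=\delta(a,b)$, which follow from $\vt(x^{-1})=-\vt(x)$) are what make the coefficient $\delta(\sigma(\zeta_s),\nu_s)/\delta(\nu_s,h)$ transform into $\delta(\zetab_s,\om^{-1}(\nub_s))/\delta(\om^{-1}(\nub_s^{-1}),h)$ after the substitution.

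Finally I would run the symmetric case $\om'=\om s$ with $\ell(\om s)>\ell(\om)$: on the $G^\vee$ side use Bott-Samelson \eqref{B-S-loc-cor}, on the $G/B$ side $\tao(\om s)^{-1}=s\,\tao\om^{-1}$ calls for the R-matrix recursion \eqref{Rmx-loc-cor} under $\#$, and check the mirror-image matching of coefficients; by symmetry of the argument this is the same computation with the roles of the two recursions swapped. Since every $\om'\neq\id$ is reachable from a shorter element by left \emph{or} right multiplication by a simple reflection, these two inductive steps close the induction, provided one also checks the $\sigma$-dependence is consistent (the recursions relate $\EE_\sigma$ and $\EE_{s\sigma}$ or $\EE_{\sigma s}$, so the induction must be on $\ell(\om)$ with $\sigma$ free, which it is). I expect the main obstacle to be precisely the bookkeeping in the inductive step: tracking how $\#$ interacts with the Weyl-group actions $s^\nu$, $s^\zeta$, $s^\gamma$ under the simultaneous replacements $\sigma\mapsto\tao\sigma^{-1}$, $\om\mapsto\tao\om^{-1}$ — in particular verifying that $\sigma(\zeta_s)$ becomes the correct shifted argument and that $\tao$-conjugation does not introduce an unwanted permutation of the index $s$ that would spoil the match — and confirming the single global sign $(-1)^{\ell(\tao)}$ is reproduced (it should be inert under the recursions since both recursions are sign-homogeneous of degree zero in the number of steps, so the sign is entirely pinned down by the base case). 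Secondary care is needed to ensure the $q\to 0$ / domain-of-definition issues (the classes live in $K_\T(G/B)(\nu_i,h)[[q]]$) do not obstruct the substitution $h\mapsto h^{-1}$, which one handles by working with the theta-function expressions formally.
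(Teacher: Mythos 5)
Your overall strategy is the same as the paper's: fix the base case at $\om=\id$, then show by induction on $\ell(\om)$ that the expression $(-1)^{\ell(\tao)}\bigl(\EE_{\tao\om^{-1}}(X_{\tao\sigma^{-1}})\bigr)^\#$ obeys one of the two recursions of Proposition~\ref{prop:EE_recursions} for $G^\vee$, by transforming the \emph{other} recursion for $G$ through the substitution $\#$ together with $s\mapsto s^*=\tao s\tao$. The paper in fact picks the single induction $\om\mapsto\om s$ and shows, after reorganizing the R-matrix recursion \eqref{Rmx-loc-cor} for $G$, applying $\#$, and simplifying $\delta$'s, that the result is \emph{exactly} the Bott--Samelson recursion \eqref{B-S-loc-cor} for $G^\vee$, with the global sign fixed by the base case. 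That is essentially your second (``symmetric'') case; your first case, being the mirror image, would also work, and including both is redundant since every element of $W$ is reachable from $\id$ by one-sided multiplication alone.

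However, the write-up contains two concrete slips that need attention. First, an index confusion: for $\om'=s\om$ the left-hand side is $\EE_{\tao(s\om)^{-1}}(X_{\tao\sigma^{-1}})=\EE_{\tao\om^{-1}s}(X_{\tao\sigma^{-1}})$ — it is the \emph{fixed point} that becomes $\tao\om^{-1}s$, while the Schubert variety $X_{\tao\sigma^{-1}}$ is unchanged (and $\ell(\tao\om^{-1}s)<\ell(\tao\om^{-1})$, not larger, since $\ell(\tao\tau)=\ell(\tao)-\ell(\tau)$). You wrote $\EE_{\tao\om^{-1}}(X_{\tao\om^{-1}s})$, swapping the two slots and dropping $\sigma$. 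Since the recursions as stated expand the Schubert-variety index, not the fixed-point index, one must first \emph{solve} the recursion for the term whose fixed point has changed — this is precisely the ``reorganize \eqref{Rmx-loc-cor} to get $s^\zeta\EE_{s\sigma}(X_\om)=\dots$'' step in the paper, followed by applying $s^\zeta$, substituting $\om:=\tao\sigma^{-1}$, $\sigma:=\tao\om^{-1}$, and trading $s$ for $s^*$. Your proposal omits this reorganization, and the index slip suggests it is the point you had not yet pinned down. Second, the $\delta$-identity you quote is wrong in sign: from $\vt(x^{-1})=-\vt(x)$ one gets $\delta(a^{-1},b^{-1})=-\delta(a,b)$, not $+\delta(a,b)$. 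The paper uses $\delta(1/a,1/b)=-\delta(a,b)=-\delta(b,a)$ to absorb the minus sign produced by the reorganization step, so that the transformed recursion carries \emph{no} extra sign and $(-1)^{\ell(\tao)}=(-1)^{|\Pp|}$ is indeed fixed entirely by the base case $\EE_\tao(X_\tao)^\#=(-1)^{|\Pp|}\EE_\id(X^\vee_\id)$; with your incorrect $+$ sign, this cancellation would fail.
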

Note that
for a positive root $\alpha_s:\tt\to \CC$ the composition $\tao(\alpha_s)=\alpha_s\circ \tao$ is a negative root. Similarly $\tao(\alpha_s^\vee)=\alpha_s^\vee\circ \tao$ is a negative coroot. Hence 
$$\tao(\nub_s)=h^{\tao(\alpha_s^\vee)}=h^{-\alpha_\s^\vee}=\nub_\s^{-1}\,,$$
where $$\s=\tao s\tao$$
is the conjugate reflection. This is a simple reflection by \cite[\S1.8]{Humphreys}.
\medskip

In the case of $G=\SL_n$, when we identify $G$ with $G^\vee$, the substitution takes the form
$z_k:= \mu_{n+1-k}$, $\mu_k:= z_k^{-1}$, $h:= h^{-1}$. Hence, Theorem \ref{main} for $G=\SL_n$ takes the form
\begin{equation}\label{AWwants}
\EE_\sigma(X_\om)(z_1,\ldots,z_n;\mu_1,\ldots,\mu_n;h)
= 
\EE_{\tao\om^{-1}}(X_{\tao\sigma^{-1}})\left( \mu_{n+1},\ldots,\mu_1;z_1^{-1},\ldots,z_n^{-1};h^{-1}\right).
\end{equation}

\begin{example} \rm
For $G=\SL_2$ Theorem \ref{main} expresses the fact that the functions in Example \ref{SL2ex} satisfy
\[
\begin{tabular}{rl}
$-\EE_\tau(X_\tau)|_{z_1:=\mu_2,\, z_2:=\mu_1,\, \mu_1:=z_1^{-1}\!,\, \mu_2:=z_2^{-1}, \, h:=h^{-1}}=$ & 
$\EE_{\id}(X_{\id})$,
\\
$-\EE_\id(X_\tau)|_{z_1:=\mu_2,\, z_2:=\mu_1,\, \mu_1:=z_1^{-1},\, \mu_2:=z_2^{-1},\,  h:=h^{-1}}=$&
$\EE_{\id}(X_{\tau})$, \\
$-\EE_\id(X_\id)|_{z_1:=\mu_2,\, z_2:=\mu_1,\, \mu_1:=z_1^{-1},\, \mu_2:=z_2^{-1},\,  h:=h^{-1}}=$&
$\EE_{\tau}(X_{\tau})$.
\end{tabular}
\]
\end{example}

\begin{remark} \rm
The fact that $\omega$ and $\sigma$ play opposite roles on the two sides of the duality is a prediction from physics; stemming from the comparison of so-called vertex functions on the two sides of 3d mirror symmetry, and the {\em switch} between a pair of defining differential equations for vertex functions, see more details in \cite[\S 1.1]{RSVZ0}. The concrete form of the precomposition (ie. the reverse order of variables, and the inverses) is explained in type A by regarding $T^*G/B$ as a ``Cherkis bow variety,'' and noting that its 3d mirror dual {\em as a bow variety} is, although isomorphic with $T^*G/B$, the isomorphism is through some reparametrizations, see \cite[\S 5.4]{RS}. On the other hand we have to admit, that except the $A$ case the duality was
observed by us without any prediction coming from physics.
For a general group $G$ the cotangent space $T^*G/B$ is not a
bow variety in the sense of \cite{RS}.

\end{remark}

\begin{remark}\rm
The main theorem of \cite{RSVZ} is the $G=\SL_n$ special case of our Theorem \ref{main}, that is, it is equivalent to \eqref{AWwants}. Namely, in that paper an elliptic characteristic class $W_\om$, named {\em elliptic stable envelope},  is associated to the permutation $\om$. The restriction of $W_{\om}$ to the fixed point $\sigma\in S_n$ is denoted by $A_{\om,\sigma}$. The duality proved in that paper is \cite[Theorem 4]{RSVZ}:
\begin{equation}\label{RSVZmain}
A_{\om,\sigma}(z_1,\ldots,z_n;\mu_1,\ldots,\mu_n;h)=(-1)^{\binom{n}{2}} A_{\tao\sigma^{-1},\tao\om^{-1}}(\mu_n,\ldots,\mu_1;z_1^{-1},\ldots,z_n^{-1},h).
\end{equation}
In fact, the class $W_\om$ and our $\EE(X_\om)$ are proportional by
\begin{equation}
\EE(X_\om)\cdot 
eu(TG/B) \cdot 
\mathop{\prod_{i<j}}_{\om_i<\om_j} \vt\left(h\frac{\mu_j}{\mu_i}\right) \cdot
\mathop{\prod_{i<j}}_{\om_i>\om_j} \left( \vt\left(\frac{\mu_j}{\mu_i}\right) \frac{\vt(h)}{\vt'(1)}\right) =
W_\om\cdot
\mathop{\prod_{i<j}}_{\om_i<\om_j} \delta\left( \frac{\mu_i}{\mu_j},h\right).
\end{equation}
Using this proportionality, and the fact that 
\begin{equation}\label{hnoh}
\EE_\sigma(X_\om)=
\EE_\sigma(X_\om)|_{h\to h^{-1}} \cdot
\mathop{\prod_{i<j}}_{\om_i<\om_j} \frac{\vt\left(h\frac{\mu_i}{\mu_j}\right)}{\vt\left(h\frac{\mu_j}{\mu_i}\right)} \cdot
\mathop{\prod_{i<j}}_{\sigma_i>\sigma_j} \frac{ \vt\left(h\frac{z_{\sigma(j)}}{z_{\sigma(i)}}\right)}{ \vt\left(h\frac{z_{\sigma(i)}}{z_{\sigma(j)}}\right)},
\end{equation}
straightforward calculation yields that \eqref{RSVZmain} is equivalent to \eqref{AWwants}.
\end{remark}

\medskip

\noindent{\em Proof of Theorem \ref{main}.} 
We reorganize \eqref{Rmx-loc-cor} to get

\begin{equation*}
s^\zeta\EE_{s\sigma}(X_{\om})
=-
\frac{\delta(\zeta_s,\om^{-1}(\nu_s))}
{\delta(\zeta^{-1}_s,h)} \cdot
\EE_{\sigma}(X_{\om }) 
+ 
\frac{\delta(\om^{-1}(\nu_s^{-1}),h)}{\delta(\zeta^{-1}_s,h)}
 \cdot
\EE_{\sigma}(X_{s\om}) 
.
\end{equation*}
Applying $s^\zeta$ yields
\begin{equation*}
\EE_{s\sigma}(X_{\om})
=-
\frac{\delta(\zeta_s^{-1},\om^{-1}(\nu_s))}
{\delta(\zeta_s,h)} \cdot
s^\zeta\EE_{\sigma}(X_{\om }) 
+
\frac{\delta(\om^{-1}(\nu_s^{-1}),h)}{\delta(\zeta_s,h)}
 \cdot
s^\zeta\EE_{\sigma}(X_{s\om}) 
.
\end{equation*}
Now we substitute $\om:=\tao\sigma^{-1}$ and $\sigma:=\tao \om^{-1}$ and obtain
\begin{equation*}
\EE_{s\tao \om^{-1}}(X_{\tao\sigma^{-1}})
=-
\frac{\delta(\zeta_s^{-1},(\sigma\tao(\nu_s))}
{\delta(\zeta_s,h)} \cdot
s^\zeta\EE_{\tao \om^{-1}}(X_{\om }) 
+
\frac{\delta((\sigma\tao(\nu_s^{-1}),h)}{\delta(\zeta_s,h)}
 \cdot
s^\zeta\EE_{\tao \om^{-1}}(X_{s\tao\sigma^{-1}}) 
.
\end{equation*}
In the variables with bars this translates to
\begin{equation*}
\EE_{s\tao \om^{-1}}(X_{\tao\sigma^{-1}})
=-
\frac{\delta(\nub_s,(\sigma\tao(\zetab^{-1}_s))}
{\delta(\nub_s^{-1},h)} \cdot
s^\nub\EE_{\tao \om^{-1}}(X_{\om }) 
+
\frac{\delta((\sigma\tao(\zetab_s),h)}{\delta(\nub_s^{-1},h)}
 \cdot
s^\nub\EE_{\tao \om^{-1}}(X_{s\tao\sigma^{-1}}) 
.
\end{equation*}
Since $\tao(\zetab_s)=\zetab_\s^{-1}$:
\begin{equation*}
\EE_{\tao(w\s)^{-1}}(X_{\tao\sigma^{-1}})
=-
\frac{\delta(\nub_s,\sigma(\zetab_\s))}
{\delta(\nub_s^{-1},h)} \cdot
s^\nub\EE_{\tao \om^{-1}}(X_{\tao\sigma^{-1}}) 
+
\frac{\delta(\sigma(\zetab_\s^{-1}),h)}{\delta(\nub_s^{-1},h)}
 \cdot
s^\nub\EE_{\tao \om^{-1}}(X_{\tao(\sigma \s)^{-1}}) 
.
\end{equation*}
Exchanging $s$ with $\s$ gives
\begin{equation*}
\EE_{\tao(ws)^{-1}}(X_{\tao\sigma^{-1}})
=-
\frac{\delta(\nub_\s,\sigma(\zetab_s))}
{\delta(\nub_\s^{-1},h)} \cdot
s^\nub\EE_{\tao \om^{-1}}(X_{\tao\sigma^{-1}}) 
+
\frac{\delta(\sigma(\zetab_s^{-1}),h)}{\delta(\nub_\s^{-1},h)}
 \cdot
s^\nub\EE_{\tao \om^{-1}}(X_{\tao(\sigma s)^{-1}}) 
.
\end{equation*}
Now we apply the operation $\#$, that is the substitution   $h:= h^{-1}$ and $\nub_\s:= \nub_s=\tao(\nub_\s)^{-1}$, and obtain
\begin{equation*}
\EE_{\tao(ws)^{-1}}(X_{\tao\sigma^{-1}})^\#
=-
\frac{\delta(\nub_s,\sigma(\zetab_s))}
{\delta(\nub_s^{-1},h^{-1})} \cdot
s^\nub\EE_{\tao \om^{-1}}(X_{\tao\sigma^{-1}})^\# 
+
\frac{\delta(\sigma(\zetab_s^{-1}),h^{-1})}{\delta(\nub_s^{-1},h^{-1})}
 \cdot
s^\nub\EE	_{\tao \om^{-1}}(X_{\tao(\sigma s)^{-1}})^\# 
.
\end{equation*}
Since $\delta(1/a,1/b)=-\delta(a,b)=-\delta(b,a)$ we have
\begin{equation*}
\EE_{\tao(ws)^{-1}}(X_{\tao\sigma^{-1}})^\#
=
\frac{\delta(\sigma(\zetab_s),\nub_s)}
{\delta(\nub_s,h)} \cdot
s^\nub\EE_{\tao \om^{-1}}(X_{\tao\sigma^{-1}})^\# 
+
\frac{\delta(\sigma(\zetab_s),h)}{\delta(\nub_s^,h)}
 \cdot
s^\nub\EE_{\tao \om^{-1}}(X_{\tao(\sigma s)^{-1}})^\# .
\end{equation*}
This is exactly recursion \eqref{B-S-loc-cor}  for $G^\vee$. 
The initial condition is the of the form
$$\EE_{\tao\id}(X_{\tao \sigma})^\#=
\begin{cases} 
\EE_{\tao}(X_{\tao})^\#&\text{ if }\sigma=\id\\
0&\text{ if }\sigma\neq \id.
\end{cases}$$
Note that
\begin{multline*}\EE_{\tao}(X_{\tao})^\#=\prod_{s\in \Pp}\delta(\zeta_s^{-1},h)^\#=\prod_{s\in \Pp}\delta(\nub_\s,h^{-1})=\prod_{s\in \Pp}\delta(\nub_s,h^{-1})=\\
(-1)^{|\Pp|}\prod_{s\in \Pp}\delta(\nub_s^{-1},h)=(-1)^{|\Pp|}\EE_{\id}(X^\vee_{\id})\,.\end{multline*}
The conclusion follows.
$\square$

\begin{remark} \rm
The  substitution $\T\times \T^\vee\times\CC^*\to \T^\vee\times \T\times\CC^*$, $(a,b,h)\mapsto (\tao(b)^{-1},a,h^{-1})$, denoted by $\#$, is a key component of the theorem. Denoting it by $\#_G$ we see that 
\[
\#_{G^\vee} \circ \#_G : (a,b,h)\mapsto (\tao(a)^{-1}, \tao(b)^{-1}, h)\,,
\]
is not the identity in general and only $(\#_{G^\vee} \circ \#_G)^2=\id$. 
It would be interesting to know the physical interpretation of this phenomenon in the terminology of Section \ref{sec:physics}.  \end{remark}

\begin{remark}\rm
Composing the statement of Theorem \ref{main} for $G$ and that for $G^\vee$ results the constraint
\[
\EE_\sigma(X_\om)=\left( \EE_{\tau_0 \sigma \tau_0}(X_{\tau_0 \om \tau_0})\right)^{\#_{G^\vee}\circ \#_G}
\]
of local elliptic classes, where the composition $\#_{G^\vee} \circ \#_G$ takes the form
$$(\zeta_s,\nu_s)\;\mapsto (\zeta_{\tao s\tao},\nu_{\tao s\tao}).$$
This constraint is not trivial in type $A_n$ ($n>1$), $D_n$ ($n$ odd) and $E_6$ only; in other types $\tao$ is central, in fact $\tao=-\id\in W \subset \GL(\tt^*)$, see \cite[Exercise 4.10]{BjoBr}.
\end{remark}

\begin{remark}\rm 
The push-forward to the point of the elliptic class is called the elliptic genus. The elliptic genus of a smooth variety has an interpretation as the Euler characteristic of the Elliptic complex, \cite[\S2.1]{BoLi1}. For a singular variety or a singular pair such an interpretation is not clear. Just like the stringy numbers defined by Batyrev \cite{Ba} the elliptic genus is a combination of some invariants of a resolution which miraculously does not depend on the resolution. The local elliptic class (when we assume the torus action) in its full generality is hard to decipher. Only in the case of the quotient singularities for a finite groups a partial interpretation is available. When $q\to 0$ we obtain the motivic Chern class, which can be understood as a graded dimension of invariants of some representation. Precisely, it is given by the Molien series, \cite{Donten-BuryWe}. For an attempt of an interpretation of the full elliptic class see \cite[\S5]{MikoszWe}. The understanding of the numerical properties of the local elliptic class is only satisfactory for quotient singularities. For Schubert varieties in $G/B$ it would be interesting to give a translation into terms of representation theory of $G$. A relation with Verma modules in prime characteristic was given in \cite{SZZ},  this is done only after the specializtion to motivic Chern classes.
\end{remark}

\section{Example: the dual pair $\SO(5)$ and $\Sp(2)$} \label{sec:last}

Consider the Langlands dual pair $G=\SO(5)$, $G^\vee=\Sp(2)$. Their Weyl group is the dihedral group of order 8, generated by the reflections 
$s_1=\begin{pmatrix} 0 & 1 \\ 1 & 0\end{pmatrix}$, $s_2=\begin{pmatrix} 1 & 0 \\ 0 & -1\end{pmatrix}$ acting on the rank 2 weight lattice. The simple roots of $\SO(5)$ are listed in \cite[Chapter 6, Planche II]{Bou81}
$$\alpha_1=(1,-1)\,,\qquad \alpha_2=(0,1)\,,$$
$$\alpha^\vee_1=(1,-1)\,,\qquad \alpha^\vee_2=(0,2)\,.$$
Let $z_1, z_2$ be the coordinates on $\T$ and $\mu_1^{-1},\mu_2^{-1}$ the coordinates on $\T^\vee$ (we use inverses to agree with the convention originating from \cite{RTV}). Then
$$\zeta_1=\tfrac{z_2}{z_1}\,,\qquad \zeta_2=\tfrac1{z_2}\,,$$
$$\nu_1=\tfrac{\mu_2}{\mu_1}\,,\qquad \nu_2=\tfrac1{\mu_2^2}\,.$$
For the dual group $G^\vee=\Sp(2)$ we have (\cite[Chapter 6, Planche III]{Bou81})
$$\zetab_1=\tfrac{\z_2}{\z_1}\,,\qquad \zetab_2=\tfrac1{\z_2^2}\,,$$
$$\nub_1=\tfrac{\mub_2}{\mub_1}\,,\qquad \nub_2=\tfrac1{\mub_2}\,.$$
The two tables below display some local elliptic classes of $\EE_\sigma(X_\om)$ for these two groups, as can be derived from the recursions in Proposition \ref{prop:EE_recursions}. For brevity we write $(a|b)$ for $\delta(a,b)$.
\begin{equation}\tag{$\SO(5)$}
\begin{tabular}{|p{1.5cm}|p{2.5cm}|p{2.5cm}|p{2.5cm}|p{2.5cm}|p{1.5cm}|}  \hline
 ${}_\om\backslash {}^\sigma$ & $1$ & $s_1$ & $s_2$ & $s_1s_2$  & \ldots  \\
 \hline
$1$ & 
   \tiny $(\mu_1^2|h)\,(\frac{\mu_1}{\mu_2}|h)\,\times$ $(\mu_1\mu_2|h)\,(\mu_2^2|h)$ &
   \tiny $0$ & 
   \tiny $0$ & 
   \tiny $0$ & 
   \ldots \\  \hline
$s_1$ & 
   \tiny $(\mu_1^2|h)\,(\mu_1\mu_2|h)\,\times$ $(\mu_2^2|h)\,(\frac{z_2}{z_1}|\frac{\mu_2}{\mu_1})$&  
   \tiny $(\mu_1^2|h)\,({\mu_1}{\mu_2}|h)\,\times$ $(\mu_2^2|h)\,(\frac{z_1}{z_2}|h)$ &
   \tiny  $0$& 
   \tiny $0$ & 
   \ldots \\  \hline
$s_2$ & 
   \tiny $(\mu_1^2|h)\,(\frac{\mu_1}{\mu_2}|h)\,\times$ $(\mu_1\mu_2|h)\,(\frac{1}{z_2}|\frac{1}{\mu_2^2})$ & 
   \tiny $0$ & 
   \tiny $(\mu_1^2|h)\,(\frac{\mu_1}{\mu_2}|h)\,\times$ $(\mu_1\mu_2|h)\,(z_2|h)$ &
   \tiny $0$ & 
   \ldots \\  \hline
$s_1s_2$ & 
   \tiny $ (\mu_1^2|h)\,(\frac{\mu_1}{\mu_2}|h)\,\times$ $(\frac{1}{z_2}|\frac{1}{\mu_2^2})\,(\frac{z_2}{z_1}|\frac{1}{\mu_1\mu_2})$ & 
   \tiny $ (\mu_1^2|h)\,(\frac{\mu_1}{\mu_2}|h)\,\times$ $(\frac{1}{z_1}| \frac{1}{\mu_2^2})\,(\frac{z_1}{z_2}|h)$ &
   \tiny $ (\mu_1^2|h)\,(\frac{\mu_1}{\mu_2}|h)\,\times$ $(z_2|h)\,(\frac{z_2}{z_1}|\frac{1}{\mu_1\mu_2})$ &
   \tiny $ (\mu_1^2|h)\,(\frac{\mu_1}{\mu_2}|h)\,\times$ $(z_1|h)\,(\frac{z_1}{z_2}|h)$ 
   & \ldots \\  \hline
$\vdots$ & $\vdots$ & $\vdots$ & $\vdots$ & $\vdots$ & $\ddots$\\\hline
\end{tabular}
\end{equation}

\begin{equation}\tag{$\Sp(2)$}
\begin{tabular}{|p{1.5cm}|p{1.5cm}|p{2.5cm}|p{2.5cm}|p{2.5cm}|p{2.5cm}|}  \hline
 ${}_\om\backslash {}^\sigma$  
& \ldots & $s_1s_2$ & $s_1s_2s_1$ & $s_2s_1s_2$ & $\tao$   \\
 \hline
$\vdots$ & $\ddots$ & $\vdots$ & $\vdots$ & $\vdots$ & $\vdots$ \\
\hline
$s_1s_2$ & 
   \ldots & 
   \tiny $
 ({\mub_1}|h)\, ( \tfrac{{\mub_1}}{{\mub_2}}|h )\,\times$ $({\z_1}^2|h )\, ( \tfrac{{\z_1}}{{\z_2}}|h )$ &
   \tiny $0$ & 
   \tiny $0$ & 
   \tiny $0$  \\  \hline
$s_1s_2s_1$ & 
   \ldots & 
   \tiny $ ({\mub_2}|h)\,({\z_1}^2|h)\,\times$ $(\tfrac{1}{{\z_1}{\z_2}}| \tfrac{{\mub_2}}{{\mub_1}})\,(\tfrac{{\z_1}}{{\z_2}}|h )$ &  
   \tiny $({\mub_2}|h)\,({\z_1}^2|h )\,\times$ $(\tfrac{{\z_1}}{{\z_2}}|h)\,({\z_1} {\z_2}|h)$ &
   \tiny  $0$& 
   \tiny $0$ \\  \hline
$s_2s_1s_2$ & 
   \ldots &
   \tiny $(\tfrac{{\mub_1}}{{\mub_2}}|h)\,({\z_1}^2|h)\,\times$ $(\tfrac{1}{{\z_2}^2}|\tfrac{1}{{\mub_1}})\,
    (\tfrac{{\z_1}}{{\z_2}}|h )$ & 
   \tiny $0$ & 
   \tiny $(\tfrac{{\mub_1}}{{\mub_2}}|h )\,({\z_1}^2|h)\,\times$ $({\z_1}{\z_2}|h)\,({\z_2}^2|h)$ &
   \tiny $0$ \\  \hline
$\tao$ & 
   \ldots & 
   \tiny $(\z_1^2| h)\,  (\tfrac{\z_1}{\z_2}| h)\,\times$
$(\tfrac1{\z_2^2}|\tfrac1{\mub_2})\, (\tfrac1{\z_1\z_2}|\tfrac{\mub_2}{\mub_1})$ & 
   \tiny $({\z_1}^2|h)\,(\tfrac{1}{{\z_2}^2}|\tfrac{1}{{\mub_2}})\,\times$ $(\tfrac{{\z_1}}{{\z_2}}|h)\,({\z_1}{\z_2}|h)$ &
   \tiny $({\z_1}^2|h )\,( \tfrac{{\z_2}}{{\z_1}}| \tfrac{{\mub_2}}{{\mub_1}} )\,\times$
    $({\z_1} {\z_2}|h)\, ({\z_2}^2|h )$ &
   \tiny $({\z_1}^2|h )\, ( \tfrac{{\z_1}}{{\z_2}}|h )\,\times$
    $({\z_1} {\z_2}|h)\, ({\z_2}^2|h )$ 
    \\  \hline
\end{tabular}
\end{equation}

Note that the elliptic classes of $X_{\tao}$ and $X^\vee_{\tao}$ can be computed using the reduced words $s_1s_2s_1s_2$ and $s_2s_1s_2s_1$. For example
\begin{align*}
\EE_{s_1s_2}(X_{\tao}^\vee) =&
({\z_1}^2|h ) \,(\tfrac{\z_1}{\z_2}| h)\left(
({\z_1}^2| \tfrac{1}{{\mub_2}} )
( \tfrac{1}{{\z_1} {\z_2}}| \tfrac{1}{{\mub_1}{\mub_2}} )+
( \tfrac{1}{{\z_1}^2}| \tfrac{1}{{\mub_1}} ) ( \tfrac{{\z_1}}{{\z_2}}| \tfrac{1}{{\mub_1} {\mub_2}} )
  +
( \tfrac{1}{{\z_2}^2}| \tfrac{1}{{\mub_1}} )( \tfrac{{\z_2}}{{\z_1}}| \tfrac{{\mub_2}}{{\mub_1}} )
\right)\\
=& (\z_1^2| h)\,(\tfrac{\z_1}{\z_2}| h)\,(\tfrac1{\z_2^2}|\tfrac1{\mub_2}) \,
(\tfrac1{\z_1\z_2}|\tfrac{\mub_2}{\mub_1}).
\end{align*}
In the tables above the first class is given in the form obtained from the word $s_1s_2s_1s_2$ and the second from the word $s_2s_1s_2s_1$.
 The substitution given in Theorem \ref{main} is realized by
$$\mu_i\leftrightarrow \z_i^{-1},\qquad \mub_i\leftrightarrow z_i^{-1},\qquad h\leftrightarrow h^{-1}.$$
We encourage the reader to check that the tables after this substitution match.

\end{document}